\newcommand{\R}{{\mathbb R}} 
\newcommand{\Z}{{\mathbb Z}} 
\newcommand{\C}{{\mathbb C}}
\renewcommand{\H}{{\mathbb H}} 
\newtheorem{theorem}{Theorem}
\newtheorem{lemma}{Lemma}
\newtheorem{corollary}{Corollary}
\theoremstyle{definition}
\newtheorem{definition}{Definition} 
\numberwithin{definition}{section} 
\newtheoremstyle{mystyle}{}{}{}{}{\bfseries}{.}{ }{} 
\theoremstyle{mystyle} 
\newtheorem{remark}{Remark}
\begin{document} 
 
\title[]{On a 
strong  multiplicity one property for \\   
the length spectra of even dimensional  \\  compact hyperbolic spaces} 
 
\date{} 
 
\author{Chandrasheel Bhagwat} 
\author{C.S.Rajan} 
 
\address{Tata Institute of Fundamental Research\\ 
Homi Bhabha Road\\ 
Mumbai 400005, India.} 
 
\email{chandra@math.tifr.res.in, rajan@math.tifr.res.in} 
 
\begin{abstract} 
We prove a strong multiplicity one theorem for the length spectrum of compact  
even dimensional hyperbolic spaces i.e. if all but finitely many
closed geodesics for two compact even
dimensional hyperbolic spaces 
 have the same length, 
 then all closed geodesics have the same length. 
\end{abstract} 
 
\maketitle 
\section{Introduction} The analogy between the spectrum and arithmetic 
of hyperbolic surfaces was studied by A. Selberg. The primitive closed 
geodesics on a hyperbolic surface with finite volume can be considered 
as analogues of prime numbers and Selberg established an analogue of 
the prime number theorem for primitive closed geodesics (see 
\cite{He}). The concept of the length spectrum can be introduced, and 
the Selberg trace formula establishes a relationship between the 
spectrum of the Laplacian acting on functions on a compact hyperbolic 
surface and the length spectrum of the surface. This can be generalized to higher dimensional compact hyperbolic 
spaces $X_{\Gamma}$ which are quotients of the hyperbolic $n$-space 
$\H_{n}$ by torsion free uniform lattices $\Gamma$ acting on $\H_{n}$ 
by isometries. 
 
Define the length spectrum of $X_{\Gamma}$ to be the function $$ L_{\Gamma} : \R 
\rightarrow \mathbb{N} \cup \left\{ 0 \right\}$$ which to a real 
number $l$, assigns the number of closed geodesics of length $l$ in 
$X_{\Gamma}$.  Two compact hyperbolic spaces $X_{\Gamma_{1}}$ and 
$X_{\Gamma_{2}}$ are said to be  \textit{length-isospectral} if 
$L_{\Gamma_{1}}(l) = L_{\Gamma_{2}}(l)$ for all real numbers $l$.  
 
In this article we establish  
the following strong multiplicity one type property for 
the length spectrum of even dimensional compact hyperbolic spaces: 
 
\begin{theorem}\label{liso} Let $\Gamma_1$ and $\Gamma_2$ be uniform 
lattices in the isometry group of $2n$-dimensional hyperbolic 
space. Suppose  $L_{\Gamma_{1}}(l)$ = $L_{\Gamma_{2}}(l)$ for all but 
finitely many real numbers $l$. Then $L_{\Gamma_{1}}(l)$ = 
$L_{\Gamma_{2}}(l)$ for  all real numbers $l$ i.e., the corresponding 
compact hyperbolic spaces  are length-isospectral. 
\end{theorem}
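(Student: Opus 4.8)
The plan is to package the length spectrum into the Laplace transform of its counting measure and to exploit the analytic continuation supplied by the Selberg trace formula, the point being that in even dimensions the identity contribution forces an infinite family of poles whose residues detect the volume. Set $\rho = (2n-1)/2$ and put
$$C_\Gamma(s) = \sum_{l} L_\Gamma(l)\, e^{-(s-\rho)l},$$
which converges absolutely for $\mathrm{Re}(s)$ large by the prime geodesic theorem. The hypothesis enters only through the difference: since $L_{\Gamma_1}(l) = L_{\Gamma_2}(l)$ off a finite set $F$,
$$C_{\Gamma_1}(s) - C_{\Gamma_2}(s) = \sum_{l\in F}\bigl(L_{\Gamma_1}(l) - L_{\Gamma_2}(l)\bigr)e^{-(s-\rho)l} =: P(s)$$
is a finite exponential sum, hence an \emph{entire} function of $s$. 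Everything thus reduces to proving $P\equiv 0$.

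Next I would produce the analytic continuation of each $C_\Gamma$ individually. Applying the trace formula on $X_\Gamma$ with a resolvent-type test function expresses the holonomy-weighted geodesic sum $\sum_\gamma w_\gamma\, e^{-(s-\rho)\ell_\gamma}$ as a spectral term (with poles at the points $s$ for which $s(2\rho-s)$ is an eigenvalue of the Laplacian) plus an identity term proportional to $\mathrm{vol}(X_\Gamma)$. The main obstacle is that $C_\Gamma$ records the pure count $L_\Gamma(l) = \#\{\gamma : \ell_\gamma = l\}$, whereas the trace formula weights each $\gamma$ by the orbital factor $w_\gamma = \ell_0(\gamma)/|\det(I - P_\gamma)|^{1/2}$, which for $n\ge 2$ depends on the transverse holonomy $m_\gamma$. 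To recover the unweighted count I would run the trace formula simultaneously over the homogeneous vector bundles $E_\sigma$ attached to the irreducible representations $\sigma$ of $M = SO(2n-1)$, and expand $|\det(I - P_\gamma)|^{-1/2}$ as a character series in $m_\gamma$; combining these identities disentangles length from holonomy and should yield a formula of the shape
$$C_\Gamma(s) = \mathrm{vol}(X_\Gamma)\,\Phi_n(s) + S_\Gamma(s),$$
in which $\Phi_n$ is universal (independent of $\Gamma$) and $S_\Gamma$ is a meromorphic function determined entirely by the spectra of the bundle Laplacians on $X_\Gamma$, with poles confined to the spectral points.

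The even-dimensional hypothesis is used precisely here. For $\H_{2n}$ the Plancherel density carries a factor $\tanh(\pi r)$ (the Harish--Chandra $c$-function has poles), so the identity factor $\Phi_n$ acquires \emph{infinitely many} poles, lying in a fixed arithmetic progression independent of $\Gamma$, each with residue a nonzero constant times $\mathrm{vol}(X_\Gamma)$. In odd dimensions the Plancherel density is a polynomial and no such infinite family appears, which is exactly why the argument is confined to even dimensions.

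Finally I would combine the two descriptions of $C_{\Gamma_1} - C_{\Gamma_2}$. Writing the difference as $(\mathrm{vol}(X_{\Gamma_1}) - \mathrm{vol}(X_{\Gamma_2}))\,\Phi_n(s) + (S_{\Gamma_1}(s) - S_{\Gamma_2}(s))$ and equating it with the entire function $P(s)$, all poles must cancel. Since the trivial poles of $\Phi_n$ sit at fixed points with residues proportional to the volume, while $S_{\Gamma_1} - S_{\Gamma_2}$ is holomorphic there, entirety of $P$ forces $\mathrm{vol}(X_{\Gamma_1}) = \mathrm{vol}(X_{\Gamma_2})$ (equivalently, by Gauss--Bonnet, equal Euler characteristics). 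The remaining spectral poles of $S_{\Gamma_1} - S_{\Gamma_2}$ must then cancel as well, forcing the bundle spectra of the two manifolds to coincide with multiplicity. But once the volumes and all the spectra agree, the explicit formula above gives $C_{\Gamma_1}\equiv C_{\Gamma_2}$ as meromorphic functions; hence $P\equiv 0$, i.e. $L_{\Gamma_1}(l) = L_{\Gamma_2}(l)$ for every $l$, including the lengths in $F$. I expect the construction and control of $S_\Gamma$ --- the disentangling of holonomy from length and the verification that its poles are exactly the spectral points with the correct residues --- to be the technical heart of the argument.
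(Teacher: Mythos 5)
Your high-level intuition is correct and genuinely parallels the paper's mechanism: the $\tanh(\pi r)$ factor in the even-dimensional Plancherel density is exactly the spectral-side source of what the paper uses, namely the factor $\bigl[\sin(\pi z/T)\bigr]^{2n\chi(X_\Gamma)}$ in the functional equation of the Ruelle zeta function, and your step ``an arithmetic progression of poles with residues proportional to volume forces $\mathrm{vol}(X_{\Gamma_1})=\mathrm{vol}(X_{\Gamma_2})$'' is the analogue of the paper's conclusion $\chi(X_{\Gamma_1})=\chi(X_{\Gamma_2})$. But there is a genuine gap at precisely the point you defer as ``the technical heart'': the claimed formula $C_\Gamma(s) = \mathrm{vol}(X_\Gamma)\,\Phi_n(s) + S_\Gamma(s)$ with $S_\Gamma$ meromorphic on $\mathbb{C}$ and poles confined to spectral points. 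No trace-formula manipulation yields this, because the geodesic side of the trace formula is inescapably weighted by $\ell_0(\gamma)/|\det(I-P_\gamma)|^{1/2}$, where $\ell_0(\gamma)$ is the \emph{primitive} length. Your bundle trick (running over $\sigma\in\widehat{M}$ and expanding the determinant in characters of $M$) does remove the holonomy factor --- this is exactly how the Ruelle zeta function is assembled from Selberg zeta functions --- but it cannot remove the weight $\ell_0(\gamma)$, which is intrinsic to anything with an Euler-product structure (it is the analogue of $\log p$ in the von Mangoldt function). What continues meromorphically is the weighted series $\sum_{[\gamma]} \ell_0(\gamma)\,e^{-s\ell(\gamma)}$, essentially $-R_\Gamma'/R_\Gamma$; passing from it to your unweighted $C_\Gamma(s)=\sum_{[\gamma]}e^{-s\ell(\gamma)}$ requires a Möbius-type inversion involving $\sum_k \frac{\mu(k)}{k}\log R_\Gamma(ks)$, which creates logarithmic branch points at $z/k$ for every zero or pole $z$ of $R_\Gamma$ and every $k\ge 1$. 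These accumulate, and exactly as for the classical prime zeta function $\sum_p p^{-s}$, the unweighted counting series has a natural boundary rather than a meromorphic continuation. The object your argument rests on therefore does not exist, and the final step ``all poles must cancel'' has nothing to act on.

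The paper avoids this entirely by never continuing a counting series. Under the hypothesis, the ratio $R_{\Gamma_1}(z)/R_{\Gamma_2}(z)$ becomes a \emph{finite} product of factors $\bigl(1-e^{-zl}\bigr)^{\pm 1}$, hence trivially meromorphic on all of $\mathbb{C}$; substituting this into the functional equation $R_\Gamma(z)R_\Gamma(-z)=\bigl[\sin(\pi z/T)\bigr]^{2n\chi(X_\Gamma)}$ and comparing the locations of zeros and poles (real multiples of $T$ on one side, points on the imaginary axis on the other) forces equality of Euler characteristics and then total cancellation of the finite product, giving equality of the primitive length spectra and, from there, of the length spectra. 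If you want to salvage a spectral-side argument, you must formulate everything in terms of the weighted sums (equivalently, the zeta functions themselves) rather than the raw counts $L_\Gamma(l)$; as written, the proposal cannot be completed.
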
 
 
\remark This result can be considered in analogy with the 
classical strong multiplicity one theorem for cusp 
forms. Suppose $f$ and $g$ are newforms for some Hecke congruence 
subgroup $\Gamma_{0}(N)$ such that the eigenvalues of the Hecke 
operator at a prime $p$ are equal for all but finitely many primes 
$p$. Then the strong multiplicity one theorem of Atkin and Lehner 
states that $f$ and $g$ are equal (\textit{cf}. ~\cite[p.125]{La}). 
 
The proof of Theorem \ref{liso} 
uses the analytic properties of Ruelle zeta 
function, in particular the functional equation satisfied by it. The 
method is similar to the proof of the strong multiplicity one theorem  
for $L$-functions of the Selberg class given in 
\cite{MM}. For odd dimensions, the form of the functional equation 
satisfied by the Ruelle zeta function is different, and hence our 
method does not shed any light when the dimension is odd.

\remark Using an analytic version of the Selberg Trace formula, 
J. Elstrodt, F. Grunewald, and J. Mennicke  
proved a different version of Theorem \ref{liso}  
for $n = 2, ~3$.~(\cite[Theorem 3.3, p.203]{EGM}).  
Their notion of length spectrum is different from the  
notion defined in this paper. 
 
\section{Preliminaries} 
 
Let $G$ be the connected component of the isometry group 
$\text{SO}(n,1)$ of $\mathbb{H}_{n}$. Fix a maximal compact subgroup 
$K$ of $G$. Hence $G/K$ is homeomorphic to the $n$-dimensional 
hyperbolic space $\H_{n}$. Let $X_{\Gamma}$ be a compact 
$n$-dimensional hyperbolic space of the form 
\begin{equation} X_{\Gamma} = \Gamma \backslash G/K, 
\end{equation} 
where $\Gamma$ is a torsion free uniform lattice in $G$. 
 
Let $C$ be a free homotopy class in $X_{\Gamma}$. Let $\gamma_C$ be  a
curve in $C$ defined on the interval $\left[0,a \right]$, for some
positive real number $a$.  There  exists a lift $\tilde{\gamma}_C$ of
$\gamma_C$ to $\H_{n}$. Since  $\Gamma$ acts on $\H_{n}$ by deck
transformations, the action is  transitive on fibres and hence there
exists $g_C \in \Gamma$ such that  $\tilde{\gamma}_C(a)=
g_{C}.\tilde{\gamma}(0)$. The element $g_C$ is determined upto
conjugacy in $\Gamma$. For an element $g$ of $\Gamma$, let $[g]$
denote the conjugacy class   of $g$ in $\Gamma$. It can be seen that
the map $C  \mapsto \left[ g_C \right]$ is a  bijection between the
collection of free homotopy classes in $X_{\Gamma}$ and  the set of
conjugacy classes of elements  in $\Gamma$.  
 
It is known that in any  
negatively curved compact Riemannian manifold every free 
homotopy class $C$ contains a unique closed geodesic which we continue 
to denote by $\gamma_C$. Since  
 $X_{\Gamma}$ has constant negative 
sectional curvature $-1$, we have the following lemma : 
  
\begin{lemma}\label{freehom}  
There is a bijective correspondence 
between the set of closed geodesic classes in $X_{\Gamma}$ and the set 
of conjugacy classes of $\Gamma$ given by, 
 \[ \left[\gamma_C \right] \rightarrow \left[ g_C \right]. \] 
\end{lemma}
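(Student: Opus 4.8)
The plan is to obtain the desired bijection as a composition of two correspondences, one of which has already been recorded in the paragraph preceding the statement. Recall that we have at hand the bijection $C \mapsto [g_C]$ between the set of free homotopy classes in $X_\Gamma$ and the set of conjugacy classes of $\Gamma$. It therefore suffices to produce a bijection between the set of closed geodesic classes and the set of free homotopy classes, and then to compose the two.

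First I would invoke the cited fact that, since $X_\Gamma$ is a compact Riemannian manifold of constant sectional curvature $-1$ and hence negatively curved, every free homotopy class $C$ contains a unique closed geodesic $\gamma_C$. This produces a well-defined map $C \mapsto [\gamma_C]$ from free homotopy classes to closed geodesic classes. Uniqueness of the geodesic in each class gives injectivity: if $[\gamma_{C_1}] = [\gamma_{C_2}]$, then these geodesics are freely homotopic, so $C_1 = C_2$. Surjectivity is immediate, since any closed geodesic $\gamma$ is the distinguished representative of its own free homotopy class $[\gamma]$. Hence $C \mapsto [\gamma_C]$ is a bijection, and composing its inverse with $C \mapsto [g_C]$ yields the stated correspondence $[\gamma_C] \mapsto [g_C]$.

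To make the geometry transparent, and in particular to see directly why the assignment descends to conjugacy classes, I would also describe the underlying mechanism through the universal cover $\H_{n} \to X_\Gamma$. Since $\Gamma$ is torsion-free and cocompact in $G \subset \mathrm{SO}(n,1)$, there are no elliptic or parabolic elements, so every non-identity element $g$ is a hyperbolic (loxodromic) isometry and admits a unique invariant geodesic, its axis $A_g \subset \H_{n}$, along which $g$ translates by a fixed length. The image of $A_g$ in $X_\Gamma$ is a closed geodesic, and replacing $g$ by a conjugate $hgh^{-1}$ merely replaces $A_g$ by $h \cdot A_g$, which has the same image downstairs; thus the closed geodesic depends only on $[g]$. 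Conversely, a closed geodesic lifts to a geodesic in $\H_{n}$ stabilized by some $g \in \Gamma$, necessarily its axis, which recovers the inverse correspondence.

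The only point genuinely requiring care is the existence-and-uniqueness of the closed geodesic in each free homotopy class, which is precisely the ingredient quoted as known; in the constant-curvature setting it reduces to the uniqueness of the axis of a hyperbolic isometry, so no further obstacle intervenes. The remainder amounts to checking that the two maps are mutually inverse bijections, which is routine bookkeeping.
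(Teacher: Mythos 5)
Your proof is correct and follows essentially the same route as the paper, which presents the lemma as an immediate consequence of composing the bijection $C \mapsto [g_C]$ with the existence and uniqueness of the closed geodesic in each free homotopy class of a compact negatively curved manifold. The additional universal-cover discussion via axes of hyperbolic isometries is sound but not needed beyond what the paper already records.
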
 
 
\begin{definition} Let $\gamma \in \Gamma$. The length $\ell(\gamma)$ 
of the conjugacy class $[\gamma]$ is defined to be the length of the 
unique closed geodesic in the free homotopy class corresponding to 
$\gamma$ in $X_{\Gamma}$. 
\end{definition} 
 
\remark  Let $G=KAN$ be an Iwasawa decomposition for $G=SO(n,1)$,
where $A$ is the connected component of the split part of a Cartan
subgroup of $G$.  Let $\mathfrak{a}$ be it's Lie algebra 
Fix an element $X_0$ of norm 1 in $\mathfrak{a}$ with respect 
to the Killing form. 
 Let $M$ denote the centralizer 
of $A$ in $K$. It is known that every semisimple element $\gamma$ of $G$  
is conjugate to an element $m_{\gamma} a_{\gamma}$ in $G$ where  
$m_{\gamma} \in M$ and $a_{\gamma} \in A$ .  If $\gamma$ is in $\Gamma$ and the length of $[\gamma]$ is 
$\ell(\gamma)$, then $a_{\gamma} = \exp(\ell(\gamma)X_{0}$.

 Since $\Gamma$ is an uniform lattice in $G$, it is known that any
 relatively compact subset of $G$ intersects only finitely many
$G$-conjugacy classes of elements in $\Gamma$. Hence 
it follows that there are only 
finitely many closed geodesics of a fixed length. 
 
We define the  length spectrum of $X_{\Gamma}$ to be  the function 
$L_{\Gamma}$ defined on $\R$ by, 
$$L_{\Gamma}(l) = \text{The number of conjugacy classes  
$[\gamma]$ in}~ \Gamma~\text{such that}~\ell(\gamma) = l.$$ 
  
\section{Ruelle Zeta function}\label{rue} 
In this section, we recall the  definition and some   
properties of the Ruelle Zeta function attached to  
a compact hyperbolic space.  
 
\begin{definition} 
A conjugacy class $[\gamma]$ of $\Gamma$ is called primitive if $[\gamma] 
\neq [\delta^n]$ for any integer $n \geq 2$ and $\delta \in \Gamma$. 
\end{definition} 
 
Define the  primitive length spectrum of $X_{\Gamma}$ to be  the 
function $PL_{\Gamma}(l)$ defined by, 
$$PL_{\Gamma}(l) = \text{The number of primitive conjugacy classes  
$[\gamma]$ in}~ \Gamma~\text{such that}~\ell(\gamma) = l.$$ 
 
By  $P_{\Gamma}$, we denote the set of primitive conjugacy 
classes in $\Gamma$. For $\gamma \in  \Gamma$,  define $N(\gamma) = 
{e}^{\ell~\left(\gamma\right)}.$ The Ruelle Zeta function is defined 
by the infinite product: 
\begin{equation}\label{ruelle} R_{\Gamma}(z)  =  
\prod \limits_{\gamma 
\in P_\Gamma}(1-(N(\gamma))^{-z})^{(-1)^{(n-1)}} 
\end{equation} 
  
It can be shown that the above product converges uniformly on compact 
sets in the  right half plane $Re(z) > \rho$ (for some $\rho$ which 
depends on $n$). Hence it defines a holomorphic function on  the 
domain $Re(z) > \rho$. The following theorem describes the analytic 
properties of the Ruelle zeta function of relevance to us (see 
\cite[page 126, Theorems 4.3 and 4.4]{BO}):  
 
\begin{theorem} 
The Ruelle Zeta function $R_{\Gamma}$ admits a meromorphic continuation 
to the whole complex plane $\mathbb{C}$ and  
 satisfies the following functional equation : 
\begin{itemize} 
\item $X_{\Gamma}$ is even dimensional of dimension $2n$. 
 \begin{equation}\label{FE} 
R_{\Gamma}(z)R_{\Gamma}(-z)= {\displaystyle\biggl[\sin\left({\frac{\pi z}{ T}}\right)\biggr]}^{2n.\chi\left(X_{\Gamma}\right)}  
\end{equation} 
where $T$ is a positive constant independent of $\Gamma$ and $\chi(X_{\Gamma})$ 
is the Euler Characteristic of $X_{\Gamma}$.\\ 
 
\item $X_{\Gamma}$ is odd dimensional of dimension $2n+1$.  
\begin{equation}\label{FE2} 
\frac{R_{\Gamma}(z)}{R_{\Gamma}(-z)} =  
\exp{\displaystyle\biggl[{\frac{\left(4\pi(n+1)~  
\text{vol}(X_{\Gamma})\right)z}{TC}}\biggr]}  
\end{equation} 
where $T,~C$ are positive constants independent of $\Gamma$. 
\end{itemize} 
\end{theorem}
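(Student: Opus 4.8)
The plan is to deduce the analytic continuation and the functional equation of $R_\Gamma$ from the corresponding, and better understood, properties of Selberg zeta functions, by means of a purely algebraic factorization of the Euler product \eqref{ruelle}. Write $d=\dim X_\Gamma$, so that the exponent in \eqref{ruelle} is $(-1)^{d-1}$ and $\mathfrak n$ has dimension $d-1$; let $M=\mathrm{SO}(d-1)$ act on the complexification $\mathfrak n_{\C}$, and for $0\le p\le d-1$ let $\tau_p=\Lambda^p\mathfrak n_{\C}$ be the $p$-th exterior power of this representation. For a finite dimensional representation $\sigma$ of $M$ one defines the Selberg zeta function
\begin{equation}\label{selberg}
Z_\Gamma(s,\sigma)=\prod_{[\gamma]\in P_\Gamma}\ \prod_{k\ge 0}\det\Bigl(1-\sigma(m_\gamma)\,S^{k}\bigl(\mathrm{Ad}(m_\gamma)|_{\mathfrak n}\bigr)\,N(\gamma)^{-(s+\rho+k)}\Bigr),
\end{equation}
where $\rho=(d-1)/2$ and $S^{k}$ is the $k$-th symmetric power. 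The first step is the identity
\begin{equation}\label{factor}
R_\Gamma(z)=\prod_{p=0}^{d-1}Z_\Gamma(z+\rho-p,\tau_p)^{(-1)^{p}},
\end{equation}
proved by expanding each local determinant and applying $\det(1-A)=\sum_{p}(-1)^{p}\,\mathrm{tr}\,\Lambda^{p}A$ to the contracting action of $m_\gamma a_\gamma$ on $\mathfrak n$: the alternating product in $p$, combined with a cancellation in the symmetric-power index $k$, collapses to the factor $(1-N(\gamma)^{-z})^{(-1)^{d-1}}$ for each primitive class. Thus it suffices to establish the meromorphic continuation and a functional equation for each $Z_\Gamma(\,\cdot\,,\tau_p)$ and then assemble them.

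Next I would feed each $Z_\Gamma(s,\sigma)$ into the Selberg trace formula. Since $\Gamma$ is a uniform lattice, $X_\Gamma$ is compact and the relevant Casimir-type operator on sections of the homogeneous bundle attached to $\sigma$ has purely discrete spectrum. Writing the logarithmic derivative of \eqref{selberg} as a sum over $[\gamma]\in P_\Gamma$ identifies it with the geometric (hyperbolic) side of the trace formula for a suitable family of test functions; equating this with the spectral side expresses $\frac{d}{ds}\log Z_\Gamma(s,\sigma)$ as a sum over the discrete spectrum plus the contribution of the identity element, which is $\mathrm{vol}(X_\Gamma)$ times the Harish-Chandra Plancherel density $P_\sigma$ of $G=\mathrm{SO}(d,1)$. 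Because the spectral sum converges and is meromorphic in $s$, this yields the meromorphic continuation of $Z_\Gamma(s,\sigma)$ to all of $\C$, with zeros and poles at the spectral parameters; by \eqref{factor} the same holds for $R_\Gamma$, extending it from the half-plane $\mathrm{Re}(z)>\rho$ of convergence.

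The functional equation then comes from the symmetry of the spectral side under $s\mapsto -s$. Matching the trace-formula identities at $s$ and at $-s$, the discrete contributions cancel and one is left with
\begin{equation}\label{fezeta}
\frac{d}{ds}\log\bigl(Z_\Gamma(s,\sigma)\,Z_\Gamma(-s,\sigma)\bigr)=\mathrm{vol}(X_\Gamma)\,\bigl(P_\sigma(s)-P_\sigma(-s)\bigr),
\end{equation}
so that $Z_\Gamma(s,\sigma)Z_\Gamma(-s,\sigma)$ equals the exponential of $\mathrm{vol}(X_\Gamma)$ times an explicit primitive of the Plancherel density. Substituting into the alternating product \eqref{factor} and using that the shifts $z+\rho-p$ pair the factors symmetrically, the individual primitives combine into a single closed form on the right-hand side of the functional equation for $R_\Gamma$.

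The split between the two cases, and the main obstacle, both lie in evaluating this right-hand side. For $G=\mathrm{SO}(d,1)$ the Plancherel density $P_\sigma(s)$ is a polynomial in $s$, multiplied \emph{in even dimension} by a transcendental factor (a $\tanh(\pi s)$, equivalently a ratio of Gamma functions whose poles lie on a vertical line). Exponentiating the volume times the primitive of this factor, and applying the reflection formula $\Gamma(w)\Gamma(1-w)=\pi/\sin(\pi w)$, turns the product $R_\Gamma(z)R_\Gamma(-z)$ into a power of $\sin(\pi z/T)$, while the polynomial part of the density integrates to a term that cancels in the alternating product \eqref{factor}. In odd dimension $P_\sigma$ is a pure polynomial with no such poles, $\chi(X_\Gamma)=0$, and only the linear-in-$z$ volume term of \eqref{FE2} survives. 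The passage from the volume prefactor to the Euler characteristic in \eqref{FE} is effected by Gauss--Bonnet: for a hyperbolic $d$-manifold with $d$ even one has $\chi(X_\Gamma)=c_d\,\mathrm{vol}(X_\Gamma)$ with $c_d$ an explicit universal constant, which is exactly why the even case is governed by $\chi$ and the odd case by $\mathrm{vol}$. The technical heart is the bookkeeping: one must track the weights of $\Lambda^{p}\mathfrak n_{\C}$ together with the symmetric-power factors $S^{k}$, verify that every contribution except the identity term cancels across the alternating product, and carry the universal constants ($T$, the exponent $2n\,\chi$, and $c_d$) through the integration of the Plancherel density so as to reproduce \eqref{FE} and \eqref{FE2} exactly. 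Compactness of $X_\Gamma$ is essential throughout, since it guarantees a purely discrete spectrum and hence the absence of any continuous-spectrum term that would otherwise obstruct the $s\mapsto -s$ symmetry on the spectral side.
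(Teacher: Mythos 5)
The paper does not actually prove this theorem: it is quoted from Bunke--Olbrich \cite[p.~126, Theorems 4.3 and 4.4]{BO}, so the only meaningful comparison is with the proof in that reference, which your sketch reproduces in outline. The architecture you describe --- factoring $R_\Gamma$ into shifted Selberg zeta functions $Z_\Gamma(z+\rho-p,\Lambda^p\mathfrak{n}_{\C})^{(-1)^p}$ via $\det(1-A)=\sum_p(-1)^p\,\mathrm{tr}\,\Lambda^pA$, continuing each $Z_\Gamma(\cdot,\sigma)$ meromorphically through the trace formula (purely discrete spectrum, by compactness), evaluating the identity contribution by the Plancherel density, and invoking Gauss--Bonnet to trade the volume prefactor for $\chi(X_\Gamma)$ in even dimensions --- is exactly the strategy behind the cited result, going back to Fried's factorization of the Ruelle zeta function.

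One displayed step, however, is false as written and would derail the computation. The spectral part of $\frac{Z_\Gamma'}{Z_\Gamma}(s,\sigma)$ is a sum of terms of the shape $2s/(s^2+r_\lambda^2)$, which is \emph{odd} in $s$, while the Plancherel density $P_\sigma$ is \emph{even}; hence in your combination $\frac{d}{ds}\log\bigl(Z_\Gamma(s,\sigma)Z_\Gamma(-s,\sigma)\bigr)=\frac{Z_\Gamma'}{Z_\Gamma}(s,\sigma)-\frac{Z_\Gamma'}{Z_\Gamma}(-s,\sigma)$ the discrete contributions \emph{double} rather than cancel, and your right-hand side $\mathrm{vol}(X_\Gamma)\bigl(P_\sigma(s)-P_\sigma(-s)\bigr)$ vanishes identically. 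The correct intermediate functional equation is of \emph{ratio} type: $Z_\Gamma(s,\sigma)/Z_\Gamma(-s,\sigma)=\exp\bigl(c\,\mathrm{vol}(X_\Gamma)\int_0^{s}P_\sigma(t)\,dt\bigr)$, where now the odd spectral terms cancel in $\frac{Z_\Gamma'}{Z_\Gamma}(s,\sigma)+\frac{Z_\Gamma'}{Z_\Gamma}(-s,\sigma)$. The product form \eqref{FE} for $R_\Gamma$ in even dimension $d$ still emerges, for the reason you gesture at with ``the shifts pair the factors symmetrically'': using the $M$-module isomorphism $\Lambda^p\mathfrak{n}\cong\Lambda^{d-1-p}\mathfrak{n}$ and $d-1=2\rho$, one finds $R_\Gamma(-z)=\prod_{q}Z_\Gamma\bigl(-(z+\rho-q),\tau_q\bigr)^{(-1)^{d-1}(-1)^q}$, so for $d$ even $R_\Gamma(z)R_\Gamma(-z)=\prod_q\bigl[Z_\Gamma(s_q,\tau_q)/Z_\Gamma(-s_q,\tau_q)\bigr]^{(-1)^q}$ with $s_q=z+\rho-q$, to which the ratio equation applies and the $\tanh$-type densities integrate to the $\sin$-power; for $d$ odd the same duality gives $(-1)^{d-1}=1$, making $R_\Gamma(z)/R_\Gamma(-z)$ the natural combination and yielding the linear exponent of \eqref{FE2}. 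With that pairing corrected, your sketch is a faithful outline of the Bunke--Olbrich proof, modulo the substantial bookkeeping you yourself flag.
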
 
 
\section{Proof of Theorem \ref{liso}} 
 
We begin with the following preliminary lemma. 
\begin{lemma}\label{lem} 
Let $e \neq \alpha \in \Gamma$. Then the centralizer $C_{\Gamma}(\alpha)$ 
is cyclic.  
\end{lemma}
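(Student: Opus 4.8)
The plan is to show that the centralizer $C_{\Gamma}(\alpha)$ of a nontrivial element $\alpha \in \Gamma$ is infinite cyclic by using the dynamics of the action on $\H_n$ together with the discreteness of $\Gamma$. Since $\Gamma$ is a torsion free uniform lattice in $G = \mathrm{SO}(n,1)^{\circ}$, every nontrivial $\alpha \in \Gamma$ is a hyperbolic (loxodromic) isometry of $\H_n$; it has exactly two fixed points on the boundary $\partial \H_n$ and preserves the unique geodesic $\sigma_{\alpha}$ joining them, acting on it by translation through the length $\ell(\alpha)$. The first step is to observe that any $g \in C_{\Gamma}(\alpha)$ must permute the fixed-point set of $\alpha$ on $\partial \H_n$, and since $\alpha$ has no torsion and $g$ commutes with $\alpha$, in fact $g$ fixes each of the two endpoints. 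Consequently $g$ also preserves the axis $\sigma_{\alpha}$ and acts on it as a translation (no reflection is possible in the orientation-preserving connected group $G$). Thus $C_{\Gamma}(\alpha)$ embeds into the group $\R$ of translations of the geodesic $\sigma_{\alpha}$ via the translation-length homomorphism.

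The second step is to invoke discreteness: because $\Gamma$ is discrete and acts properly discontinuously on $\H_n$, the image of $C_{\Gamma}(\alpha)$ under this homomorphism to $\R$ is a discrete subgroup of $\R$. A discrete subgroup of $(\R,+)$ is either trivial or infinite cyclic, and it is nontrivial since it contains the translation by $\ell(\alpha) \neq 0$ coming from $\alpha$ itself. Hence the image is infinite cyclic. The third step is to check that the homomorphism is injective, i.e. that an element of $C_{\Gamma}(\alpha)$ acting as the identity translation on $\sigma_{\alpha}$ must be trivial. An isometry of $\H_n$ fixing the axis pointwise lies in the compact group fixing that geodesic; since $\Gamma$ is torsion free and discrete, any such element of finite order in this compact stabilizer is trivial, and more carefully one argues that an element of $\Gamma$ fixing a full geodesic is the identity. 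Combining injectivity with the previous step identifies $C_{\Gamma}(\alpha)$ with an infinite cyclic subgroup of $\R$, proving the lemma.

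The step I expect to be the main obstacle is the injectivity claim, namely ruling out nontrivial elements of $C_{\Gamma}(\alpha)$ that translate $\sigma_{\alpha}$ by zero. Such an element fixes the axis setwise with zero translation length and could in principle rotate the normal directions to $\sigma_{\alpha}$, so it lives in the stabilizer $M_{\alpha} \cong \mathrm{SO}(n-1)$ of the geodesic, which is compact. The clean way to finish is to note that a discrete subgroup of $G$ meeting a compact set is finite, so the kernel is a finite subgroup of the torsion free group $\Gamma$, hence trivial. Once injectivity into $\R$ is established and the image is shown to be discrete and nontrivial, the conclusion that $C_{\Gamma}(\alpha)$ is infinite cyclic is immediate.
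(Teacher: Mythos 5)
Your proof is correct and is essentially the paper's own argument rendered in geometric language: your translation-length homomorphism $C_{\Gamma}(\alpha) \to \R$ is exactly the paper's map $h \mapsto a_h \in A \cong \R$ (discrete image, hence cyclic), and your kernel argument --- the pointwise stabilizer of the axis is a compact group $\cong \mathrm{SO}(n-1)$, which meets the discrete torsion-free $\Gamma$ only in the identity --- is precisely the paper's final step, where $h h_0^{-k}$ is conjugate into the compact group $M$ and therefore trivial. If anything, your version is slightly more complete: by showing every element of $C_{\Gamma}(\alpha)$ fixes both boundary endpoints of the axis of $\alpha$, you justify the simultaneous conjugation of the whole centralizer into $MA$, a point the paper uses implicitly when it asserts that $h \mapsto a_h$ respects the group structure.
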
 
 
\begin{proof}   Let $h$ be an element of $C_{\Gamma}(\alpha)$. Then
$h$ is conjugate to an element  $g = m_{h} a_{h}$ in $G$ where $m_{h}
\in M$ and $a_{h} \in A$. Let $H = \left\lbrace  a_{h} : h \in
C_{\Gamma}(\alpha) \right\rbrace $. Since elements of $M$ commute with
elements of $A$,  it follows that $H$ is a subgroup of $A$. Since
$\Gamma$ is discrete, so is $H$ and consequently $H$ is cyclic as $A$
is isomorphic to $\R$. Let $a_{h_{0}}$ be a generator of $H$.  We show
that $C_{\Gamma}(\alpha)$ is generated by $h_{0}$. 
 
Given $h \in C_{\Gamma}(\alpha)$, $a_{h} = a_{h_{0}}^k$ for some $k
\in \Z$. Then   $h.h_{0}^{-k}$ is conjugate to some element of
$M$. Since $\Gamma$ is discrete and torsion-free,   and $M$ is
compact,  it follows that
$h.h_{0}^{-k} = e$. Thus $C_{\Gamma}(\alpha)$ is cyclic.  
\end{proof} 
 
\begin{corollary}\label{cor} 
Let $[\alpha]$ and $[\beta]$ be primitive conjugacy classes such that $\left[\alpha^k\right] = 
\left[\beta^{k'}\right]$ for some natural numbers $k, ~k'$. 
 Then $[\alpha] = [\beta]$ and $k = k'$. 
\end{corollary}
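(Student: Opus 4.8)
The plan is to prove the corollary as a consequence of Lemma~\ref{lem}. First I would observe that the hypothesis $[\alpha^k] = [\beta^{k'}]$ means there is some $g \in \Gamma$ with $g \alpha^k g^{-1} = \beta^{k'}$; after replacing $\beta$ by its conjugate $g^{-1}\beta g$ (which does not change the conjugacy class $[\beta]$ nor its primitivity), I may assume outright that $\alpha^k = \beta^{k'}$ as elements of $\Gamma$. Call this common element $\delta = \alpha^k = \beta^{k'}$.

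The key step is to bring both $\alpha$ and $\beta$ into a single cyclic group. Since $\alpha$ commutes with $\alpha^k = \delta$ and $\beta$ commutes with $\beta^{k'} = \delta$, both $\alpha$ and $\beta$ lie in the centralizer $C_\Gamma(\delta)$. Taking $\alpha$ itself as the nontrivial element in Lemma~\ref{lem} (note $\delta \neq e$ since $\alpha \neq e$ and $\Gamma$ is torsion-free, and $C_\Gamma(\delta) = C_\Gamma(\alpha^k)$ contains $C_\Gamma(\alpha)$ but I want the centralizer of $\delta$), I conclude that $C_\Gamma(\delta)$ is infinite cyclic, say generated by an element $c$. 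Thus $\alpha = c^{a}$ and $\beta = c^{b}$ for some integers $a, b$.

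Now I would exploit primitivity. Since $\alpha = c^a$ and $\alpha$ is primitive, $a$ must equal $\pm 1$; otherwise $\alpha = (c^{\pm 1})^{|a|}$ with $|a| \geq 2$ would contradict primitivity. The same reasoning gives $b = \pm 1$. Hence both $\alpha$ and $\beta$ are generators of the infinite cyclic group $C_\Gamma(\delta) = \langle c \rangle$, so $[\alpha]$ and $[\beta]$ are (up to the ambiguity of $c$ versus $c^{-1}$) the same primitive class. The relation $\alpha^k = \beta^{k'}$ then reads $c^{\pm k} = c^{\pm k'}$ in an infinite cyclic group; since $k, k' \geq 1$, comparing exponents forces the signs to agree and $k = k'$, whence $\alpha = \beta$ and in particular $[\alpha] = [\beta]$.

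The main obstacle I anticipate is the sign bookkeeping: an element can be primitive while being the inverse of another primitive element, so I must rule out the possibility that $\alpha = c$ and $\beta = c^{-1}$. This is handled by the fact that lengths are nonnegative and the identification in Lemma~\ref{freehom} together with the relation $a_\gamma = \exp(\ell(\gamma)X_0)$, which pins down the geodesic direction; more directly, $\alpha^k = \beta^{k'}$ with $k, k' \geq 1$ already forces $a = b$ in $\langle c \rangle \cong \Z$ once we write $\alpha = c^a,\ \beta = c^b$ and note $ak = bk'$ with $a,b \in \{\pm 1\}$ and $k,k' \geq 1$, giving $a = b$ and then $k = k'$. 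I would present this exponent comparison carefully as the crux of the argument.
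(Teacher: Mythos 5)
Your proof is correct and follows essentially the same route as the paper: reduce to a common element $\delta=\alpha^k=\beta^{k'}$, apply Lemma~\ref{lem} to see that $C_\Gamma(\delta)$ is cyclic, and use primitivity to force $\alpha$ and $\beta$ to be generators. The paper's own proof is just a terse two-line version of this argument; your careful handling of the sign ambiguity via the exponent comparison $ak=bk'$ with $k,k'\geq 1$ is precisely the detail the paper leaves implicit.
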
 
 
\begin{proof} Let $[g] = \left[\alpha^k\right] = 
\left[\beta^{k'}\right] $. By Lemma \ref{lem} it follows that $C_{\Gamma}(g)$ is cyclic. Since both  
$\alpha$ and $\beta$ are primitive conjugacy classes it follows that $[\alpha] = [\beta]$ and $k = k'$. 
\end{proof} 
 
 Now we prove the following lemma,  
recovering the length spectrum from the primitive length spectrum :  
 \begin{lemma}\label{eq} 
 $$L_{\Gamma}(l) = \sum \limits_{d ~=~ 1}^{\infty} PL_{\Gamma}\left(\frac{l}{d}\right) \quad \forall~l \in \R.$$ 
 \end{lemma}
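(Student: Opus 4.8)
The plan is to establish a bijection between conjugacy classes of a given length $l$ and pairs consisting of a primitive class together with a power. Concretely, I would show that every conjugacy class $[\gamma]$ with $\ell(\gamma)=l$ arises uniquely as $[\alpha^d]$ for some primitive class $[\alpha]$ and some positive integer $d$, where necessarily $\ell(\alpha)=l/d$. Summing over all possible values of $d$ then counts each such class exactly once, which is precisely the claimed identity.

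First I would argue existence: given $e\neq\gamma\in\Gamma$, its centralizer $C_\Gamma(\gamma)$ is cyclic by Lemma \ref{lem}, so let $\alpha$ be a generator (chosen so that $a_\alpha$ generates the corresponding discrete subgroup $H$ of $A$ with $\ell(\alpha)>0$). Then $\gamma=\alpha^d$ for some integer $d$, and after replacing $\alpha$ by $\alpha^{-1}$ if necessary we may take $d\geq 1$. The class $[\alpha]$ is primitive, since any nontrivial factorization $\alpha=\delta^m$ with $m\geq 2$ would force $\delta\in C_\Gamma(\gamma)=\langle\alpha\rangle$, contradicting that $\alpha$ generates. Next I would verify the length relation: using the remark that $a_\gamma=\exp(\ell(\gamma)X_0)$, the relation $\gamma=\alpha^d$ gives $a_\gamma=a_\alpha^d$, hence $\ell(\gamma)=d\,\ell(\alpha)$, so that $\ell(\alpha)=l/d$ as required.

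For uniqueness, I would invoke Corollary \ref{cor}: if $[\alpha^d]=[\gamma]=[\beta^{d'}]$ with $[\alpha],[\beta]$ primitive and $d,d'\geq 1$, then $[\alpha]=[\beta]$ and $d=d'$. This guarantees that the map sending a primitive class $[\alpha]$ with $\ell(\alpha)=l/d$ (together with the chosen $d$) to the class $[\alpha^d]$ is injective across all $d$, and by the existence argument it is also surjective onto the set of classes of length $l$. Therefore the classes $[\gamma]$ with $\ell(\gamma)=l$ are in bijection with the disjoint union over $d\geq 1$ of primitive classes of length $l/d$, which yields
\begin{equation*}
L_\Gamma(l)=\sum_{d=1}^\infty PL_\Gamma\!\left(\frac{l}{d}\right).
\end{equation*}

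The main obstacle I anticipate is the bookkeeping needed to make the correspondence genuinely bijective rather than merely well-defined on one side; in particular, I must be careful that the generator $\alpha$ is canonically determined up to the sign ambiguity $\alpha\mapsto\alpha^{-1}$, and that fixing the positive orientation (so that $\ell(\alpha)>0$ and $d\geq 1$) removes this ambiguity cleanly. Once existence and uniqueness are both phrased in terms of the cyclicity of centralizers and Corollary \ref{cor}, the summation identity follows formally, so the substance of the argument is entirely contained in these two structural inputs.
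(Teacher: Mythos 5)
Your proposal is correct and follows essentially the same route as the paper: the paper also establishes the identity by matching each conjugacy class of length $l$ with a power of a primitive class (surjectivity giving one inequality, Corollary \ref{cor} giving the other). The only difference is one of detail, not substance: where the paper simply asserts that every class $[\gamma]$ of length $l$ has an associated primitive class of length $l/d$, you derive this existence explicitly from the cyclicity of centralizers (Lemma \ref{lem}), which is exactly the justification the paper leaves implicit.
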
 
  
  \begin{proof} Let $d \geq 0$ be an integer and $l$ be a real 
number. Note that if $\left[\gamma\right]$ is a primitive conjugacy 
class of length $l/d$, then   $\left[\gamma^{d}\right]$ is a conjugacy 
class of length $l$.  Hence by Corollary \ref{cor}, 
  \[ L_{\Gamma}(l) \geq \sum \limits_{d ~=~ 1}^{\infty} 
PL_{\Gamma}\left(\frac{l}{d}\right) \quad \forall~l \in \R.\] 
  \noindent Conversely, given a conjugacy class $[\gamma]$ of length 
$l$, the associated primitive class is of length $l/d$ for some 
non-negative integer $d$. Hence the other inequality follows and this 
proves the lemma.  
  \end{proof} 
   
\begin{corollary}\label{prelim} Let ${\Gamma_{1}}$ and ${\Gamma_{2}}$ 
be torsion free uniform lattices in SO$(n,1)$.  Let $X_{\Gamma_{1}}$ 
and $X_{\Gamma_{2}}$ be the associated compact hyperbolic spaces. 
Suppose  
\[ PL_{\Gamma_{1}}(l) = PL_{\Gamma_{2}}(l) \quad \forall~l \in \R.\]  
Then  
\[ L_{\Gamma_{1}}(l) = L_{\Gamma_{2}}(l) \quad \forall~l \in \R.\]  
i.e. the spaces $X_{\Gamma_{1}}$ and $X_{\Gamma_{2}}$ are 
length isospectral. 
  \end{corollary}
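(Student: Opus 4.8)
The plan is to deduce the corollary directly from Lemma \ref{eq}, which expresses the full length spectrum as a sum over the primitive length spectrum evaluated at the divided lengths $l/d$. Since the hypothesis asserts that the primitive length spectra of $X_{\Gamma_{1}}$ and $X_{\Gamma_{2}}$ agree at \emph{every} real number, they in particular agree at each of the points $l/d$, and the summation formula then transports this pointwise equality to the full length spectra.

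Concretely, fix $l \in \R$. Applying Lemma \ref{eq} to $\Gamma_{1}$ and then to $\Gamma_{2}$, and using the hypothesis $PL_{\Gamma_{1}}(l/d) = PL_{\Gamma_{2}}(l/d)$ for each $d \geq 1$, I would write
\[
L_{\Gamma_{1}}(l) = \sum \limits_{d ~=~ 1}^{\infty} PL_{\Gamma_{1}}\left(\frac{l}{d}\right) = \sum \limits_{d ~=~ 1}^{\infty} PL_{\Gamma_{2}}\left(\frac{l}{d}\right) = L_{\Gamma_{2}}(l).
\]
Since $l$ was arbitrary, this yields $L_{\Gamma_{1}}(l) = L_{\Gamma_{2}}(l)$ for all $l \in \R$, which is precisely the assertion that the two spaces are length isospectral.

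There is no genuine obstacle here, as the corollary is an immediate formal consequence of Lemma \ref{eq}. The only point requiring attention is that the middle equality above is a termwise substitution, legitimate because the primitive spectra coincide at every real argument and hence at each $l/d$ individually. No rearrangement or convergence issue intervenes, since for each fixed $l$ the sum in Lemma \ref{eq} has only finitely many nonzero terms; indeed, closed geodesics of bounded length are finite in number, as recorded in the preliminaries using that $\Gamma$ is a uniform lattice. Thus the result follows at once once Lemma \ref{eq} is in hand, and the substance of the argument lies entirely in that lemma together with Corollary \ref{cor}.
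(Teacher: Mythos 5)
Your proof is correct and matches the paper's approach: the paper states this corollary as an immediate consequence of Lemma \ref{eq}, exactly the termwise substitution you perform. Your extra remark on the finiteness of nonzero terms (so no convergence issue arises) is a sound observation consistent with the paper's preliminaries.
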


In order to prove Theorem \ref{liso}, by Corollary \ref{prelim}, it is 
enough to show that the primitive  length spectra of $X_{\Gamma_1}$ and $X_{\Gamma_2}$ 
are equal. Let $R_{\Gamma_{1}}(z)$ and $R_{\Gamma_{2}}(z)$ be the 
respective Ruelle Zeta functions.  Let dim $X_{\Gamma_{1}} = 
\text{dim}~ X_{\Gamma_{2}} = 2n$. From the definition of the Ruelle Zeta 
function, we have in the region  $\mbox{Re}(z) > \rho$,  
 
\begin{equation}\label{ratioeqn} 
\frac{R_{\Gamma_{1}}(z)}{R_{\Gamma_{2}}(z)} \quad = \quad \frac{\prod 
\limits_{\gamma \in P_{\Gamma_1}} {\displaystyle\biggl(1 - 
N(\gamma)^{-z}\biggr)}^{(-1)^{(2n-1)}}}{\prod \limits_{\gamma' \in 
P_{\Gamma_2}} 
{\displaystyle\biggl(1-N(\gamma')^{-z}\biggr)^{(-1)^{(2n-1)}}}} 
\end{equation}  
 
Under the hypothesis of Theorem (\ref{liso}), all but 
finitely many factors cancel out. Hence there exist finite sets $S_1$ 
and $S_2$ such that for $\mbox{Re}(z) > \rho$,  
 
\[ 
 \frac{R_{\Gamma_{1}}(z)}{R_{\Gamma_{2}}(z)}\quad = \quad \frac{\prod \limits_{j \in S_2} \displaystyle\biggl{(1 - N(\gamma'_j)^{-z}\biggr)}} 
{\prod \limits_{i \in S_1}{\displaystyle\biggl{(1 - N(\gamma_i)^{-z}\biggr)}}}. 
\]  
 
Since the products in the above equation are over finite sets, the 
ratio $R_{\Gamma_{1}}(z)/R_{\Gamma_{2}}(z)$  defines a meromorphic 
function on the whole complex plane. Since 
$R_{\Gamma_{1}}/R_{\Gamma_{2}}$ admits a meromorphic continuation, the 
two expressions must agree for all $z \in \C$, i.e. 
\[ \frac{R_{\Gamma_{1}}(-z)}{R_{\Gamma_{2}}(-z)}\quad  = \quad \frac{\prod \limits_{j \in S_2} 
\displaystyle\biggl{(1-N(\gamma'_j)^{z}\biggr)}}{\prod \limits_{i \in 
S_1} \displaystyle\biggl{(1- N(\gamma_i)^{z}\biggr)}} 
\]

From the functional equation (\ref{FE}) applied to $X_{\Gamma_{1}}$ and $X_{\Gamma_{2}}$,  
 
\begin{equation} \label{lhs} 
\begin{split} 
\frac{R_{\Gamma_{1}}(z) R_{\Gamma_{1}}(-z)}{R_{\Gamma_{2}}(z) 
R_{\Gamma_{2}}(-z)} \quad & =\quad  
{\sin{(\pi z / T)}}^{2n.(\chi(X_{\Gamma_{1}})\ -\ \chi(X_{\Gamma_{2}}))} 
\quad \\ 
& = \quad \frac{\prod \limits_{j \in S_2}  
\displaystyle\biggl{(1-N(\gamma'_j)^{-z}\biggr)}\ \displaystyle\biggl{(1-N(\gamma'_j)^{z}\biggr)}} 
{\prod \limits_{i \in S_1}\displaystyle\biggl{(1 - N(\gamma_i)^{-z}\biggr)}\ 
 \displaystyle\biggl{(1- N(\gamma_i)^{z}\biggr)}}  
\end{split} 
\end{equation} 
  
If $\chi(X_{\Gamma_{1}}) - \chi(X_{\Gamma_{2}})\ \neq\ 0$, then every 
integer multiple of $T$ is either a zero or a pole of the left hand 
side in equation (\ref{lhs}). On the other hand, all zeros and poles 
of right hand side are on imaginary axis. This leads to a 
contradiction. Hence we conclude that $\chi(X_{\Gamma_{1}}) = \chi(X_{\Gamma_{2}})$ and the 
left hand side in (\ref{lhs}) is identically $1$ i.e. for all $z \in 
\C$, 
  
\[\prod \limits_{i \in S_1} (1 - N(\gamma_i)^{-z}) 
 \prod \limits_{i \in S_1} (1 - N(\gamma_i)^{z}) \ 
= \prod \limits_{j \in S_2} (1 - N(\gamma'_j)^{-z}) 
 \prod \limits_{j \in S_2} (1 - N(\gamma'_j)^{z}) 
\]  
The function on the left side of the above equation vanishes at $Z$ 
if and only if  $z = 2\pi i k / \ell(\gamma_{j})$ for some $k \in 
\mathbb{Z}$ and $j \in S_1$. Similarly  the function on the right side 
of the above equation vanishes at exactly when $z = 2\pi i k / 
\ell(\gamma'_{j})$ for some $k \in \mathbb{Z}$ and $j \in S_2$.  Hence 
we get an  equality of sets with multiplicity : 
\[ \left\{ \frac{2 \pi i k}{\ell(\gamma_{j})}  : k \in \mathbb{Z}; \ j 
\in S_1 \right\}\quad  = \quad \left\{ \frac{2\pi i k}{\ell(\gamma'_{j})} : k \in 
\mathbb{Z};\ j \in S_2 \right\}. \] 
 
Hence we conclude that all factors in equation 
(\ref{ratioeqn}) must cancel out. Consequently, $PL_{\Gamma_{1}}(l)= 
PL_{\Gamma_{2}}(l)$ for all $l \in \mathbb{R}$, and  
Theorem \ref{liso} follows from  Corollary \ref{prelim}.

\begin{remark} In the odd dimensional case,  arguing as above  
and using the functional equation (\ref{FE2}), yields the following 
equation: 
\begin{equation*}\label{oddcase}  
\exp{{\displaystyle\biggl[\frac{(4 
\pi (n+1) (\text{vol}(X_1) - \text{vol}(X_2))z)}{TC}\biggr]}}= 
\frac{\prod \limits_{i \in S_1}{\displaystyle\biggl(1 - 
N(\gamma_i)^{-z}\biggr)} \prod \limits_{j \in S_2}{\displaystyle\biggl 
(1-N(\gamma'_j)^{-z}\biggr)}}{\prod \limits_{i \in S_1} 
{\displaystyle\biggl (1- N(\gamma_i)^{z}\biggr)} \prod \limits_{j \in 
S_2} {\displaystyle\biggl (1-N(\gamma'_j)^{z}\biggr)}} 
\end{equation*}  
Since  
\[ \frac{1-a^{z}}{1-a^{-z}}=-a^z,\] 
the above equation  yields no information. Hence we 
cannot conclude anything in the odd dimensional case.  
\end{remark}

\end{document}